\newtheorem{theorem}{Theorem}[section]
\theoremstyle{definition}
\theoremstyle{remark}
\numberwithin{equation}{section}
\newcommand\style{\mathcal }          
\newcommand{\A}{\style{A}}
\newcommand{\B}{\style{B}}
\newcommand{\G}{{\rm{G}}}
\renewcommand{\H}{\style{H}}
\newcommand{\K}{\style K}
\newcommand\mn{{\style M}_n}
\newcommand\mpee{{\style M}_p}
\newcommand\mthreepee{{\style M}_{3p}}
\newcommand\mthree{{\style M}_3}
\newcommand\fni{{\mathbb F}_{\infty}}
\newcommand\omin{\otimes_{\rm min}}
\newcommand\omax{\otimes_{\rm max}}
\newcommand\cstar{{\rm C}^*}                              
\newcommand\cstarr{{\rm C}_{\rm r}^*}              
\begin{document}

\title[Crossed Products of C$^*$-Algebras with WEP]{Crossed Products of C$^*$-Algebras with the Weak Expectation Property}

\author[A.~Bhattacharya]{Angshuman Bhattacharya}
\address{University of Regina,
Department of Mathematics and Statistics, 
Regina, Saskatchewan S4S 0A2, Canada}
\author[D.~Farenick]{Douglas Farenick}
\address{University of Regina,
Department of Mathematics and Statistics,
Regina, Saskatchewan S4S 0A2, Canada}
\thanks{The work of the second author is
supported in part by the Natural Sciences and Engineering Research
Council (NSERC) of Canada.}

\subjclass[2010]{Primary 46L05; Secondary 46L06}



\keywords{weak expectation property, amenable group, amenable action}

\begin{abstract}
If $\alpha$ is an amenable action of a discrete group $\G$ on a unital C$^*$-algebra $\A$, then the
crossed-product C$^*$-algebra $\A\rtimes_\alpha\G$ 
has the weak expectation property if and only if $\A$ has this property.
\end{abstract}

\maketitle

\section{Introduction}

A weak expectation 
on a unital C$^*$-subalgebra $\B\subset\B(\H)$ is a unital completely positive (ucp) 
linear map $\phi:\B(\H)\rightarrow\B''$ (the double commutant of $\B$)
such that $\phi(b)=b$ for every $b\in\B$. A unital C$^*$-algebra $\A$ has the weak expectation 
property (WEP) if $\pi(\A)$ admits a weak expectation for every faithful representation
$\pi$ of $\A$ on some Hilbert space $\H$. Equivalently, if $\A\subset\A^{**}\subset\B(\H_u)$ 
denotes the universal representation of $\A$, where $\A^{**}$ is the enveloping
von Neumann algebra of $\A$, then $\A$ has WEP if and only if there is a ucp map
$\phi:\B(\H_u)\rightarrow\A^{**}$ that fixes every element of $\A$.
The notion of weak expectation first arose in the work of C.~Lance on nuclear C$^*$-algebras \cite{lance1973}, where
it was shown that every unital nuclear C$^*$-algebra has WEP. Twenty years later E.~Kirchberg established a number of important
properties and characterisations of the weak expectation property in his penetrating study of exactness \cite{kirchberg1993}. 

A C$^*$-algebra $\A$ has the quotient weak expectation property (QWEP) if $\A$ is a quotient of a C$^*$-algebra with WEP.
The class of C$^*$-algebras with QWEP enjoys a number of permanence properties, many of which are enumerated in \cite[Proposition 4.1]{ozawa2004}
and originate with Kirchberg \cite{kirchberg1993}. For example, if $\A$ is a unital C$^*$-algebra with QWEP and if $\alpha$ is an amenable action of a discrete 
group $\G$ on $\A$, then the crossed product C$^*$-algebra $\A\rtimes_\alpha\G$ has QWEP \cite[Proposition 4.1(vi)]{ozawa2004}.

In contrast to QWEP,
the weak expectation property appears to have few permanence properties. For example, $\A\omin\B$ may fail to have 
WEP if $\A$ and $\B$ have WEP; one such example is furnished by $\A=\B=\B(\H)$ \cite{ozawa2003}.
In comparison, if $\A$ and $\B$ are nuclear, then so is $\A\omin\B$, and if $\A$ and $\B$ are exact, then so is $\A\omin\B$ \cite[\S10.1,10.2]{Brown--Ozawa-book}. 
 
The purpose of this note is to establish the following permanence result for WEP (Theorem \ref{main result}): 
\emph{if $\alpha$ is an amenable action of a discrete group $\G$
on a unital C$^*$-algebra $\A$, then $\A\rtimes_\alpha\G$ 
has the weak expectation property if and only if $\A$ does.}
In this regard, the weak expectation property is consistent with the analogous permanence results for 
nuclear and exact C$^*$-algebras \cite[Theorem 4.3.4]{Brown--Ozawa-book}.

Before turning to the proof, we note that Lance's definition of WEP requires
knowledge of all faithful representations of $\A$. It is advantageous, therefore, 
to have alternate ways to characterise the weak expectation property. We mention two such ways below.

\begin{theorem}\label{ki} {\rm (Kirchberg's Criterion \cite{kirchberg1993})} A unital C$^*$-algebra $\A$
has the weak expectation property if and only if
$\A\omin\cstar(\fni)=\A\omax\cstar(\fni)$.
\end{theorem}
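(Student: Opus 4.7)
My approach rests on Kirchberg's duality between the weak expectation property (WEP) and the local lifting property (LLP), both of which can be detected through the coincidence of the minimal and maximal C$^*$-tensor norms. Two auxiliary facts from Kirchberg's work are essential: (A) $\cstar(\fni)$ has the LLP, a consequence of the universal property of the group C$^*$-algebra together with the liftability of unitary generators on finite subsets; and (B) whenever $\cstb$ has WEP and $\cstc$ has LLP, the canonical surjection $\cstb\omax\cstc\to\cstb\omin\cstc$ is a $*$-isomorphism. The theorem follows once these two ingredients are applied to the pair $(\A,\cstar(\fni))$ in each direction of the equivalence.

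\emph{Sufficiency.} Assume $\A$ has WEP and fix the universal representation $\A\subset\B(\H)$ together with a ucp map $\phi:\B(\H)\to\A^{**}$ that fixes $\A$. Since $\B(\H)$ trivially has WEP and $\cstar(\fni)$ has LLP, (B) gives $\B(\H)\omax\cstar(\fni)=\B(\H)\omin\cstar(\fni)$. The ucp composition
\[
\A\omax\cstar(\fni)\hookrightarrow\B(\H)\omax\cstar(\fni)=\B(\H)\omin\cstar(\fni)\xrightarrow{\phi\otimes\id}\A^{**}\omin\cstar(\fni)
\]
restricts to the obvious inclusion on the algebraic tensor product $\A\odot\cstar(\fni)$. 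By uniqueness of ucp extensions from a dense $*$-subalgebra, it therefore coincides with the canonical map $\A\omax\cstar(\fni)\to\A\omin\cstar(\fni)\hookrightarrow\A^{**}\omin\cstar(\fni)$, forcing the surjection $\A\omax\cstar(\fni)\to\A\omin\cstar(\fni)$ to be injective and hence an isomorphism.

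\emph{Necessity.} Assume $\A\omin\cstar(\fni)=\A\omax\cstar(\fni)$, and let $\A\subset\B(\H)$ be an arbitrary faithful representation. The goal is to construct a ucp map $\B(\H)\to\A^{**}$ that fixes $\A$. The strategy is first to promote the given equality to $\A\omax\cstc=\A\omin\cstc$ for every unital C$^*$-algebra $\cstc$ with LLP, using that the LLP is detected on finite-dimensional operator subsystems — each of which is (by (A)) covered by a ucp lift from $\cstar(\fni)$ — together with exactness of the max tensor norm on ideals. A surjection $\cstar(\mathbb{F}_I)\to\B(\H)$, obtained from a unitary-generating set of $\B(\H)$, then transports the upgraded min/max equality into a diagram on which a weak$^*$-compactness argument in $\A^{**}$ selects a genuine ucp extension of the identity on $\A$.

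\emph{Main obstacle.} Sufficiency is essentially a diagram chase once (A) and (B) are in hand. The technical heart of the theorem is necessity: producing a ucp map out of $\B(\H)$ from a single tensor-product hypothesis requires delicate control of how the min/max coincidence passes between $\cstar(\fni)$ and arbitrary LLP algebras, and this promotion step, followed by the weak$^*$-compactness selection inside $\A^{**}$, is the step I expect to find most intricate.
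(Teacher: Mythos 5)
The paper itself offers no proof of Theorem \ref{ki}: it is quoted as a black box from Kirchberg's 1993 paper (a full treatment is in Chapter 13 of Brown--Ozawa), so there is no internal argument to compare yours against. Your proposal does identify the standard circle of ideas --- the LLP of $\cstar(\fni)$, Kirchberg's theorem $\B(\H)\omin\cstar(\fni)=\B(\H)\omax\cstar(\fni)$, and the WEP/LLP tensor theorem --- but both directions as written contain genuine gaps. In the sufficiency direction, the inference ``the composite coincides with the canonical map $\A\omax\cstar(\fni)\to\A\omin\cstar(\fni)$, forcing that surjection to be injective'' is a non sequitur: a quotient map does not become injective because some contractive (or even multiplicative) map factors through it. Worse, the hooked arrow $\A\omax\cstar(\fni)\hookrightarrow\B(\H)\omax\cstar(\fni)$ assumes exactly what must be proved --- the maximal tensor product is not injective, and the isometry of this map for inclusions admitting a weak expectation is the whole content of the step. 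The argument that works is ``The Trick'': represent $\A\omax\cstar(\fni)$ faithfully by a commuting pair $(\pi,\sigma)$, extend $\pi$ to a ucp map $\bar\pi\circ\phi:\B(\H)\to\pi(\A)''\subset\sigma(\cstar(\fni))'$ using the weak expectation $\phi$ and the normal extension $\bar\pi$ of $\pi$ to $\A^{**}$, and then invoke $\B(\H)\omax\cstar(\fni)=\B(\H)\omin\cstar(\fni)$ to dominate $\|(\pi\times\sigma)(x)\|$ by $\|x\|_{\min}$. (If you are instead permitted to cite your fact (B) wholesale, sufficiency is a one-line corollary and the diagram chase is superfluous.)

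The necessity direction is only a plan, and the plan omits the decisive construction while pointing in a slightly wrong direction. The standard argument does not use a surjection of a free-group C$^*$-algebra onto $\B(\H)$, nor a promotion to all LLP algebras: with $\A\subset\A^{**}\subset\B(\H_u)$ the universal representation, one chooses a surjection $\sigma$ of $\cstar({\mathbb F}_I)$ onto a weakly dense C$^*$-subalgebra of the \emph{commutant} $\A'$ (a von Neumann algebra is generated by its unitaries), notes that the hypothesis makes the product map $\A\odot\cstar({\mathbb F}_I)\to\B(\H_u)$, $a\otimes c\mapsto a\,\sigma(c)$, continuous for the minimal norm, extends it by Arveson's theorem to a ucp map on $\B(\H_u)\omin\cstar({\mathbb F}_I)$, and then uses the multiplicative domain to force the restriction to $\B(\H_u)\otimes 1$ to take values in $\sigma(\cstar({\mathbb F}_I))'\cap\cdots=\A''=\A^{**}$. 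Without this commutant/multiplicative-domain mechanism, no weak$^*$-compactness argument inside $\A^{**}$ will produce the weak expectation; the ``upgrade to all LLP algebras'' and ``exactness of the max norm on ideals'' you invoke are not the relevant tools here. As it stands, the heart of both implications is missing.
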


The second description is useful in cases where one desires to fix a particular faithful representation of $\A$.

\begin{theorem}\label{wep3x3}{\rm (A Matrix Completion Criterion \cite{farenick--kavruk--paulsen2012})} If 
$\A$ is a unital C$^*$-subalgebra of $\B(\H)$, then the following statements are equivalent:
\begin{enumerate}
\item\label{w3-1} $\A$ has the weak expectation property;
\item\label{w3-2} if, given $p\in\mathbb N$ and
$X_1,X_2\in\mpee(\A)$, there exist strongly positive operators $A,B,C\in\mpee(\B(\H))$ such that $A+B+C=1$
and 
\[
Y\,=\,\left[ \begin{array}{ccc} A& X_1 & 0 \\ X_1^* & B & X_2\\ 0& X_2^*& C\end{array}\right]    
\]
is strongly positive in $\mathcal M_{3p}(\B(\H))$, then there also exist $\tilde A,\tilde B,\tilde C\in \mpee(\A)$ with the same property.
\end{enumerate}
\end{theorem}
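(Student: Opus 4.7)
My plan is to verify the two directions separately.

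For $(\ref{w3-1}) \Rightarrow (\ref{w3-2})$, I would start from a weak expectation $\phi : \B(\H) \to \A^{**}$ fixing $\A$ pointwise. Given the data $X_1, X_2 \in \M_p(\A)$ and strongly positive $A, B, C \in \M_p(\B(\H))$ with $A + B + C = 1$ and $Y$ strongly positive, I apply the $\M_p$-ampliation $\phi_p := \id_{\M_p} \otimes \phi$. The images $\phi_p(A), \phi_p(B), \phi_p(C)$ are positive in $\M_p(\A^{**})$, sum to $1$ by unitality, and (since the $X_i$ are fixed by $\phi_p$) the amplified completion is strongly positive in $\M_{3p}(\A^{**})$. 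To descend from $\A^{**}$ to $\A$, I invoke Kaplansky density together with a careful perturbation respecting the sum constraint: approximate $\phi_p(A)$ and $\phi_p(B)$ by $\tilde A_\lambda, \tilde B_\lambda \in \M_p(\A)_+$ in the strong operator topology and set $\tilde C_\lambda := 1 - \tilde A_\lambda - \tilde B_\lambda$. The slack given by $Y \geq \delta I$ for some $\delta > 0$ leaves room for the perturbed $3p \times 3p$ completion to remain strongly positive and for $\tilde C_\lambda$ to stay positive, provided $\lambda$ is taken far enough along the net.

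For $(\ref{w3-2}) \Rightarrow (\ref{w3-1})$, which is the harder direction, my plan is to interpret the matrix completion hypothesis as an equality between two positive cones on $\A$ tensored with a distinguished three-dimensional operator system encoding the $3 \times 3$ tridiagonal block pattern together with the trace-like constraint $A + B + C = 1$. Combined with Kirchberg's criterion (Theorem \ref{ki}), this equality of cones should yield $\A \omin \cstar(\fni) = \A \omax \cstar(\fni)$ and hence WEP. The identification of the correct operator system and the verification of the tensor-product equality are the technical core, drawing on the operator system tensor product framework of Kavruk, Paulsen, Todorov and Tomforde.

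The main obstacle is the reverse direction: bridging a purely matricial positivity condition and the global tensor-product characterization of WEP demands the operator system tensor product machinery, and requires pinpointing the precise three-dimensional operator system that mediates the equivalence. The forward direction reduces, after applying the weak expectation, to a perturbation argument in $\M_p(\A^{**})$ in which the sum constraint $\tilde A + \tilde B + \tilde C = 1$ and the strong positivity of the completion are the items to be reconciled simultaneously.
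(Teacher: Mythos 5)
A preliminary remark: the paper does not prove Theorem \ref{wep3x3} at all --- it is imported verbatim from \cite{farenick--kavruk--paulsen2012} and used as a black box in Section 2 --- so there is no internal proof to compare your attempt against; I can only assess the attempt on its own terms and against the cited source.

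Your sketch of $(\ref{w3-2})\Rightarrow(\ref{w3-1})$ is pointed in the right direction: the proof in \cite{farenick--kavruk--paulsen2012} does run through Kirchberg's criterion reduced to the operator system $\mathcal S_2=\mathrm{span}\{1,u_1,u_2,u_1^*,u_2^*\}\subset\cstar(\ftwo)$ and an operator-system duality that realises the $3\times 3$ tridiagonal completion pattern. Two caveats: the mediating system is not ``three-dimensional'' (the tridiagonal system in $\mathcal M_3$ is seven-dimensional, and $\mathcal S_2$ is five-dimensional), and the reduction from $\cstar(\fni)$ to $\mathcal S_2$ (i.e., that $\A\omin\mathcal S_2=\A\omax\mathcal S_2$ already forces $\A\omin\cstar(\ftwo)=\A\omax\cstar(\ftwo)$, and that $\mathbb F_2$ suffices in place of $\mathbb F_\infty$) is itself a substantive lemma you would have to supply, not a formality.

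The genuine gap is in $(\ref{w3-1})\Rightarrow(\ref{w3-2})$. Applying the weak expectation $\phi:\B(\H)\rightarrow\A''$ does produce a strongly positive completion with diagonal blocks in $\mpee(\A'')$ summing to $1$, but the proposed descent to $\mpee(\A)$ via Kaplansky density fails: if $\tilde Y_\lambda\rightarrow \hat Y$ in the strong operator topology and $\hat Y\geq\delta 1$, it does \emph{not} follow that $\tilde Y_\lambda\geq\delta'1$ eventually (e.g.\ $1-P_n\rightarrow 1$ strongly for rank-one projections $P_n$, yet no $1-P_n$ is bounded below). The ``slack'' $\delta$ only helps against perturbations that are small in \emph{norm}. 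Indeed, if you try to force the diagonal perturbation to be bounded below by $-\tfrac{\delta}{2}1$ while keeping the constraint $\tilde A_\lambda+\tilde B_\lambda+\tilde C_\lambda=1$, you end up demanding a two-sided estimate equivalent to $\|\tilde A_\lambda-\phi_p(A)\|\leq\delta$, i.e.\ norm-approximation of an element of $\A''$ by elements of the norm-closed algebra $\A$, which is impossible in general. So the passage from a completion over $\A''$ to one over $\A$ is exactly the hard point, and in \cite{farenick--kavruk--paulsen2012} this direction is also obtained through the tensor-product/duality identification (both directions fall out of the single equality $\A\omin\mathcal S_2=\A\omax\mathcal S_2$), not by a perturbation of a weak expectation.
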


By strongly positive one means a positive operator $A$ for which there is a real $\delta>0$ such that $A\geq\delta 1$.

Chapters 2 and 4 of the book of Brown and Ozawa \cite{Brown--Ozawa-book} shall form our main reference for facts concerning amenable groups,
amenable actions, and reduced crossed products.  

\section{The Main Result}

\begin{theorem}\label{main result} If $\alpha$ is an amenable action of a discrete group $\G$
on a unital C$^*$-algebra $\A$, then $\A\rtimes_\alpha\G$ 
has the weak expectation property if and only if $\A$ does.
\end{theorem}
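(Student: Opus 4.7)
The plan is to apply Kirchberg's Criterion (Theorem~\ref{ki}), which replaces WEP for a unital C$^*$-algebra $\B$ by the tensor-product identity $\B\omin\cstar(\fni)=\B\omax\cstar(\fni)$ and so lets us work entirely with tensor products. The key ingredients are two compatibility isomorphisms between tensor products and crossed products: for any C$^*$-algebra $D$,
\begin{align*}
(\A\rtimes_{\alpha}\G)\omax D &\;\cong\;(\A\omax D)\rtimes_{\alpha\otimes\id}\G,\\
(\A\rtimes_{\alpha,r}\G)\omin D &\;\cong\;(\A\omin D)\rtimes_{\alpha\otimes\id,r}\G.
\end{align*}
The first is a universal-property computation (both sides classify covariant representations of $(\A,\G)$ commuting with a representation of $D$), and the second is standard for reduced crossed products \cite{Brown--Ozawa-book}. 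Amenability of $\alpha$ will be used to identify $\A\rtimes_\alpha\G$ with $\A\rtimes_{\alpha,r}\G$ and, crucially, to propagate to amenability of $\alpha\otimes\id$ on $\A\omin D$ and $\A\omax D$, so that full and reduced crossed products coincide throughout.

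For the $(\Leftarrow)$ direction, suppose $\A$ has WEP, so $\A\omin\cstar(\fni)=\A\omax\cstar(\fni)$ by Theorem~\ref{ki}. Applying the two identities above with $D=\cstar(\fni)$, and using amenability to erase the reduced/full distinction, one obtains the chain
\[
(\A\rtimes_\alpha\G)\omin\cstar(\fni)\;\cong\;(\A\omin\cstar(\fni))\rtimes\G\;=\;(\A\omax\cstar(\fni))\rtimes\G\;\cong\;(\A\rtimes_\alpha\G)\omax\cstar(\fni),
\]
which by Kirchberg's criterion is WEP for $\A\rtimes_\alpha\G$.

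For the $(\Rightarrow)$ direction, suppose $\A\rtimes_\alpha\G$ has WEP. Since $\G$ is discrete, the reduced crossed product carries a canonical conditional expectation onto $\A$, and amenability of $\alpha$ makes this available as $E:\A\rtimes_\alpha\G\to\A$. The required claim is then the general principle that WEP descends through any conditional expectation; this one proves by a three-step composition
\[
\A\omin\cstar(\fni)\;\hookrightarrow\;(\A\rtimes_\alpha\G)\omin\cstar(\fni)\;=\;(\A\rtimes_\alpha\G)\omax\cstar(\fni)\;\xrightarrow{E\omax\id}\;\A\omax\cstar(\fni),
\]
in which the first map is the isometric inclusion produced by min-injectivity, the middle equality is Kirchberg's criterion applied to $\A\rtimes_\alpha\G$, and the last map is ucp. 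On the algebraic tensor $\A\odot\cstar(\fni)$ the composition restricts to the identity, and contractivity of ucp maps then yields $\|x\|_{\max}\leq\|x\|_{\min}$ for such $x$, forcing equality of the two norms and hence WEP for $\A$.

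The main obstacle is the two tensor--crossed-product identities, and in particular the verification that amenability of $\alpha$ carries over to amenability of $\alpha\otimes\id$ on $\A\omin\cstar(\fni)$, so that the crossed-product construction may be moved past the tensor factor without introducing any reduced/full ambiguity. Once these bookkeeping matters are settled, both directions reduce to a short diagram chase.
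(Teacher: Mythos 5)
Your forward direction ($\A$ has WEP $\Rightarrow$ $\A\rtimes_\alpha\G$ has WEP) is essentially the paper's argument: Kirchberg's criterion plus the two tensor/crossed-product compatibility identities, with amenability used to collapse full and reduced crossed products. Your converse, however, takes a genuinely different route. The paper fixes a faithful representation, invokes the matrix completion criterion of Theorem~\ref{wep3x3}, completes the $3\times 3$ matrix inside $\A\rtimes_\alpha\G$, and pushes the completion down to $\A$ with the canonical conditional expectation $\mathcal E$. You instead stay entirely within Kirchberg's criterion and run the standard norm comparison
\[
\A\omin\cstar(\fni)\hookrightarrow(\A\rtimes_\alpha\G)\omin\cstar(\fni)=(\A\rtimes_\alpha\G)\omax\cstar(\fni)\xrightarrow{\ \mathcal E\omax\id\ }\A\omax\cstar(\fni),
\]
which is the identity on $\A\odot\cstar(\fni)$ and hence forces $\|\cdot\|_{\max}\le\|\cdot\|_{\min}$ there. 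This is correct (functoriality of $\omax$ under ucp maps and injectivity of $\omin$ are all that is needed), and it isolates the general principle that WEP descends along any conditional expectation onto a unital subalgebra; the paper's version instead advertises the matrix completion criterion and keeps the argument representation-theoretic. Both proofs of this direction use only the existence of $\mathcal E$ and the equality $\A\rtimes_\alpha\G=\A\rtimes_{\alpha,\rm r}\G$, not amenability per se, as the paper notes in its closing remarks.

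One item you flag but do not actually carry out is the amenability of the extended action $\alpha\otimes\iota$ on $\A\omax\cstar(\fni)$, which is what justifies erasing the full/reduced distinction after replacing $\omin$ by $\omax$. The paper verifies this explicitly: given the net of finitely supported central functions $T_i:\G\rightarrow\mathcal Z(\A)$ witnessing amenability of $\alpha$, the functions $\tilde T_i(g)=T_i(g)\otimes 1$ land in $\mathcal Z(\A\omax\cstar(\fni))$ and witness amenability of $\alpha\otimes\iota$. This is routine, but it is a step your write-up should include rather than defer; note also that it is only needed for the $\omax$ side, since min tensor products are compatible with reduced crossed products without any amenability hypothesis.
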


\begin{proof} We begin with two preliminary observations that are independent of whether $\A$ has WEP or not. 

The first observation is that, 
because $\alpha$ is an amenable action of $\G$
on $\A$, the C$^*$-algebra $\A\rtimes_\alpha\G$ coincides with the
reduced crossed product C$^*$-algebra
$\A\rtimes_{\alpha,\rm {r}}\G$
 \cite[Theorem 4.3.4(1)]{Brown--Ozawa-book}.
The second observation is that if $\iota:\G\rightarrow\mbox{\rm Aut}(\B)$ denotes the trivial action of $\G$
on a unital C$^*$-algebra $\B$, then the action $\alpha\omax\iota$ of $\G$ on $\A\omax\B$ is amenable. (The 
action $\alpha\omax\iota$ of $\G$ on $\A\omax\B$ satisfies
$\alpha\omax\iota(g)[a\otimes b]=\alpha_g(a)\otimes b$ for all $g\in\G$, $a\in\A$, $b\in\B$ \cite[Remark 2.74]{Williams-book}.)

To prove this second fact, using the properties that define $\alpha$ as an amenable action \cite[pp. 124-125]{Brown--Ozawa-book},
let $\{T_i\}_{i}$ denote a net of finitely supported positive-valued functions $T_i:\G\rightarrow\mathcal Z(\A)$ (the centre of $\A$) such that
$\sum_{g\in\G}T_i(g)^2=1$ 
and
\[
\lim_i\left(\left\| \sum_{g\in\G} \left[ \alpha_g(T_i(s^{-1}g))-T_i(g)\right]^* \left[ \alpha_g(T_i(s^{-1}g))-T_i(g)\right]\right\|^2\right)\rightarrow 0 
\]
for all $s\in\G$. Define finitely supported positive-valued functions $\tilde T_i:\G\rightarrow\mathcal Z\left(\A\omax\B\right)$ by $\tilde T_i(g)=T_i(g)\omax 1_\B$. 
Then $\sum_{g\in\G}\tilde T_i(g)^2=1_{\A\omax\B}$ and the limiting equation above holds with $T_i$ replaced with $\tilde T_i$ and $\alpha$ replaced with $\alpha\omax\iota$.
Hence, the action $\alpha\omax\iota$ of $\G$ on $\A\omax\B$ is amenable.

Assume now that $\A$ has the weak expectation property. By Kirchberg's Criterion (Theorem \ref{ki}), 
$\A\omin\cstar(\fni)=\A\omax\cstar(\fni)$. Let $\iota:\G\rightarrow\mbox{\rm Aut}\left(\cstar(\fni)\right)$ denote the trivial action of $\G$
on $\cstar(\fni)$. Thus, 
$\alpha\omax\iota$ is an amenable action.
Hence,
\[
\begin{array}{rcl} (\A\rtimes_\alpha\G)\omin\cstar(\fni) &=& (\A\rtimes_{\alpha,\rm {r}}\G)\omin\cstar(\fni) \\ &&\\
&=& \left( \A\omin\cstar(\fni)\right)\rtimes_{\alpha\omax\iota,\rm{r}} \G \\ && \\
&=& \left( \A\omax\cstar(\fni)\right)\rtimes_{\alpha\omax\iota,\rm{r}} \G \\ && \\
&=& \left( \A\omax\cstar(\fni)\right)\rtimes_{\alpha\omax\iota} \G \\ && \\
&=& (\A\rtimes_\alpha\G)\omax\cstar(\fni)\,,
\end{array}
\]
where the final equality holds by \cite[Lemma 2.75]{Williams-book}. Another application of Kirchberg's Criterion implies that 
$\A\rtimes_\alpha\G$ has WEP.

Conversely, assume that $\A\rtimes_\alpha\G$ has the weak expectation property and that
$\A\rtimes_{\alpha,\rm {r}}\G$ is represented faithfully on a Hilbert space $\H$. Thus,
\[
 \A\,\subset\,\A\rtimes_{\alpha,\rm {r}}\G= \A\rtimes_\alpha\G  \,\subset\,\B(\H)
 \]
also represents $\A$ faithfully on $\H$. Let $\mathcal E:\A\rtimes_{\alpha,\rm {r}}\G\rightarrow\A$
denote the canonical 
conditional expectation of $\A\rtimes_{\alpha,\rm {r}}\G$ onto $\A$ \cite[Proposition 4.1.9]{Brown--Ozawa-book}. We now use the criterion of Theorem \ref{wep3x3} for WEP.

Suppose that $p\in\mathbb N$, 
$X_1,X_2\in\mpee(\A)$, and $A,B,C\in\mpee(\B(\H))$ are such that $A+B+C=1$ and the matrix 
\[
Y\,=\,\left[ \begin{array}{ccc} A& X_1 & 0 \\ X_1^* & B & X_2\\ 0& X_2^*& C\end{array}\right]    \in \mthreepee(\B(\H))
\]
is strongly positive. Because $\A\subset\A\rtimes_\alpha\G$ and because $\A\rtimes_\alpha\G$ has WEP, 
there are, by Theorem \ref{wep3x3},  $\tilde A, \tilde B, \tilde C\in\mpee(\A\rtimes_\alpha\G)$ such that
\[
\tilde Y\,=\,\left[ \begin{array}{ccc} \tilde A& X_1 & 0 \\ X_1^* & \tilde B & X_2\\ 0& X_2^*& \tilde C\end{array}\right]    \in \mthreepee(\A\rtimes_\alpha\G)
\]
is strongly positive and $\tilde A+\tilde B+\tilde C=1$. Because ucp maps preserve strong positivity, the matrix
\[
(\mathcal E\otimes{\rm id}_{\mthree})[\tilde Y]\,=\, \left[ \begin{array}{ccc} \mathcal E(\tilde A)& X_1 & 0 \\ X_1^* & \mathcal E(\tilde B) & X_2\\ 0& X_2^*& \mathcal E(\tilde C)\end{array}\right]    \in \mthreepee(\A)
\]
is strongly positive and the diagonal elements sum to $1\in \mthreepee(\A)$. Thus, $\A\subset\B(\H)$ satisfies the criterion of Theorem \ref{wep3x3} for WEP.
\end{proof}

\section{A Direct Proof in the Case of Amenable Groups}

The proof of Theorem \ref{main result} relies on the criteria for WEP given by Theorems \ref{ki} and \ref{wep3x3}, which seem far removed
from the defining condition of Lance and thereby making the argument of Theorem \ref{main result} somewhat indirect.
The purpose of this section is to present a more conceptual proof in the case of amenable discrete groups using
Lance's definition of WEP directly together with basic facts about amenable groups and C$^*$-algebras. 

In what follows, $\lambda$ shall denote the left regular representation of $\G$ on the Hilbert space $\ell^2(\G)$ and $e$ denotes the identity of $\G$. Two properties
that an amenable group $\G$ is well known to have are:
\begin{enumerate}
\item [{(i)}] $\A\rtimes_\alpha\G=\A\rtimes_{\alpha,\rm {r}}\G$, for every unital C$^*$-algebra $\A$, and
\item[{(ii)}] $\G$ admits a F{\o}lner net---namely a net $\{F_i\}_{i\in\Lambda}$ of finite subsets $F_i\subset\G$ such that, for every $g\in\G$,
\[
\lim_i \frac{\vert F_i\cap gF_i\vert}{\vert F_i \vert}\,=\,1\,.
\]
\end{enumerate} 
(In fact the second property above characterises amenable groups.)

\begin{theorem}\label{mr amnbl} If $\alpha$ is an action of an amenable discrete group $\G$
on a unital C$^*$-algebra $\A$, then $\A\rtimes_\alpha\G$ 
has the weak expectation property if and only if $\A$ does.
\end{theorem}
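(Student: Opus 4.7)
The plan is to prove the theorem using only Lance's definition of WEP, avoiding Theorems \ref{ki} and \ref{wep3x3} entirely. Fix a faithful representation $\A \subset \B(\H)$, so that $\A \rtimes_\alpha \G = \A \rtimes_{\alpha,\rm r} \G$ acts faithfully on $\H \otimes \ell^2(\G)$ via the reduced representation, with $a \in \A$ realised as $\pi(a) = \sum_{g \in \G} \alpha_{g^{-1}}(a) \otimes e_{g,g}$ and $g \in \G$ as $1 \otimes \lambda_g$. Let $V_e\colon \H \to \H \otimes \ell^2(\G)$ be the isometry $V_e\xi = \xi \otimes \delta_e$.

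For the reverse direction, given a weak expectation $\psi\colon \B(\H \otimes \ell^2(\G)) \to (\A \rtimes_\alpha \G)''$, use Arveson's extension theorem to extend the ucp map $\pi\colon \A \to \B(\H \otimes \ell^2(\G))$ (viewed on the operator system $\A \subset \B(\H)$) to a ucp map $\tilde\pi\colon \B(\H) \to \B(\H \otimes \ell^2(\G))$. The compression $T \mapsto V_e^* T V_e$ is ucp from $\B(\H \otimes \ell^2(\G))$ to $\B(\H)$, and it sends $(\A \rtimes_\alpha \G)''$ into $\A''$ because $\A' \otimes 1$ lies in the commutant of $\A \rtimes_\alpha \G$ and hence of $(\A \rtimes_\alpha \G)''$. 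Setting $\Phi(x) := V_e^* \psi(\tilde\pi(x)) V_e$ yields a ucp map $\B(\H) \to \A''$, and for $a \in \A$ one has $\tilde\pi(a) = \pi(a) \in \A \rtimes_\alpha \G$ fixed by $\psi$ with $V_e^* \pi(a) V_e = a$, whence $\Phi(a) = a$. Thus $\Phi$ is a weak expectation for $\A$.

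For the forward direction, let $\phi\colon \B(\H) \to \A''$ be a weak expectation for $\A$, and form the ucp map $\Psi := \phi \otimes {\rm id}\colon \B(\H \otimes \ell^2(\G)) \to \A'' \bar\otimes \B(\ell^2(\G))$; the extension to the von Neumann tensor product is legitimate by injectivity of $\B(\ell^2(\G))$. Since $\phi$ fixes $\A$ and $\A \rtimes_\alpha \G$ sits inside $\A \bar\otimes \B(\ell^2(\G))$ (via $\pi(\A) \subset \A \bar\otimes \ell^\infty(\G)$ and $1 \otimes \lambda_g$), the map $\Psi$ fixes $\A \rtimes_\alpha \G$. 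To project onto $(\A \rtimes_\alpha \G)''$, introduce the $\G$-action $\beta_s := \alpha_s \otimes {\rm Ad}(\rho_s)$ on $\A'' \bar\otimes \B(\ell^2(\G))$, where $\rho$ is the right regular representation. A direct matrix computation confirms that $\beta_s(\pi(a)) = \pi(a)$ and $\beta_s(1 \otimes \lambda_g) = 1 \otimes \lambda_g$; writing a general element $X = \sum_{g,h} x_{g,h} \otimes e_{g,h}$ and solving the covariance $x_{gs,hs} = \alpha_{s^{-1}}(x_{g,h})$ identifies the $\beta$-fixed subalgebra with $(\A \rtimes_\alpha \G)''$. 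The F{\o}lner averages
\[
\Pi_i(X) := \frac{1}{|F_i|} \sum_{s \in F_i} \beta_s(X)
\]
are ucp and fix $\A \rtimes_\alpha \G$, and the F{\o}lner condition yields $\|\beta_t\Pi_i(X) - \Pi_i(X)\| \to 0$ for each $t \in \G$ and norm-bounded $X$. Any weak-$*$ cluster point $\Pi$ is therefore a ucp projection onto $(\A \rtimes_\alpha \G)''$ fixing $\A \rtimes_\alpha \G$, and $\Pi \circ \Psi\colon \B(\H \otimes \ell^2(\G)) \to (\A \rtimes_\alpha \G)''$ is the desired weak expectation for $\A \rtimes_\alpha \G$.

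The main obstacle is the matrix-indexing bookkeeping needed to identify the $\beta$-fixed subalgebra of $\A'' \bar\otimes \B(\ell^2(\G))$ with $(\A \rtimes_\alpha \G)''$, together with confirming the normal extension of $\alpha$ to $\A''$ so that $\beta$ is a well-defined action on $\A'' \bar\otimes \B(\ell^2(\G))$. A secondary technicality is the passage of $\phi \otimes {\rm id}$ from the algebraic to the spatial/von Neumann tensor product, which is valid thanks to injectivity of $\B(\ell^2(\G))$.
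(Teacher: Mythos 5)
Your reverse direction (WEP for $\A\rtimes_\alpha\G$ implies WEP for $\A$) is correct and is essentially the paper's argument in different clothing: the compression $T\mapsto V_e^*TV_e$ is the canonical conditional expectation $\mathcal E$ realised spatially, and since you quantify over an arbitrary faithful representation $\A\subset\B(\H)$ and land in the double commutant of $\A$ inside that same $\B(\H)$, Lance's definition is verified directly. The justification that the compression carries $(\A\rtimes_\alpha\G)''$ into $\A''$ via $(\A\rtimes_\alpha\G)''\subset(\A'\otimes 1)'=\A''\,\overline\otimes\,\B(\ell^2(\G))$ is sound.

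The forward direction, however, has a genuine gap: what your construction produces is a ucp map $\B(\H\otimes\ell^2(\G))\rightarrow(\A\rtimes_\alpha\G)''$ fixing $\A\rtimes_\alpha\G$, where the double commutant is computed in the one specific regular representation on $\H\otimes\ell^2(\G)$. By the paper's definition, WEP demands a weak expectation for \emph{every} faithful representation of $\A\rtimes_\alpha\G$, equivalently a ucp map into the representation-independent object $(\A\rtimes_\alpha\G)^{**}$. A weak expectation for a single faithful representation is strictly weaker (any C$^*$-algebra in an irreducible faithful representation admits one trivially, e.g.\ $\cstarr(\ftwo)$, which lacks WEP), and there is no canonical ucp map from the von Neumann crossed product $(\A\rtimes_\alpha\G)''=\A''\,\overline\rtimes\,\G$ to $(\A\rtimes_\alpha\G)^{**}$ that would let you upgrade. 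The paper's proof is built precisely to avoid this: all of its maps target $\pi_u^{\A\rtimes_\alpha\G}(\A\rtimes_\alpha\G)''=(\A\rtimes_\alpha\G)^{**}$, and the averaging action $\beta$ is implemented by the unitaries $\pi_u^{\A\rtimes_\alpha\G}(1\otimes\lambda_g)$ inside the universal representation rather than spatially. Relatedly, your $\beta_s=\alpha_s\otimes{\rm Ad}(\rho_s)$ presupposes that each $\alpha_s$ extends to a normal automorphism of $\A''$, which can fail for an arbitrary faithful representation of $\A$; choosing $\H=\H_u^{\A}$ repairs that but leaves the first problem intact. Two further points need attention even within your framework: the extension of $\phi\otimes{\rm id}$ to all of $\B(\H\otimes\ell^2(\G))$ (the minimal tensor product does not contain $\pi(\A)$ when $\G$ is infinite) is not a consequence of injectivity of $\B(\ell^2(\G))$ --- the target $\A''\,\overline\otimes\,\B(\ell^2(\G))$ is what would need to be injective, and it generally is not; one must either argue entrywise with uniformly bounded finite corners or compress by $1\otimes p_{F_i}$ first, as the paper does. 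And the identification of the $\beta$-fixed subalgebra with $(\A\rtimes_\alpha\G)''$ is the commutation theorem for crossed products, a citable but nontrivial result rather than bookkeeping; since $\Psi$ is not a homomorphism, fixing the generators also requires a multiplicative-domain argument to conclude that all of $\A\rtimes_\alpha\G$ is fixed.
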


\begin{proof} 
Assume first that $\A\rtimes_\alpha\G$ has the weak expectation property.
To show that $\A$ has WEP, it is sufficient to show that if $\A$ is represented faithfully as a unital C$^*$-subalgebra of $\B(\K)$, for some Hilbert space $\K$, and if
$\pi_{u}^{\A}:\A\rightarrow\B(\H_u^\A)$ is the universal representation of $\A$, then there a ucp map $\omega:\B(\K)\rightarrow\A^{**}$ such that $\omega(a)=\pi_u^\A(a)$ for every $a\in\A$.

To this end,
let $\A\rtimes_\alpha\G\subset \B(\H_u^{\A\rtimes_\alpha\G})$ be the universal representation of $\A\rtimes_\alpha\G$. Because $\A$ is unital,
$\A$ is a unital C$^*$-subalgebra of $\A\rtimes_\alpha\G $. Hence,
\[
 \A\,\subset\,\A\rtimes_\alpha\G  \,\subset\,(\A\rtimes_\alpha\G)^{**}\,\subset\, \B(\H_u^{\A\rtimes_\alpha\G})
\]
and we therefore, on the one hand, consider $\A$ as a unital C$^*$-subalgebra of $\B(\K)$, where $\K=\H_u^{\A\rtimes_\alpha\G}$.
On the other hand,
\[
\begin{array}{rcl}
 \A\,\subset\,\A\rtimes_\alpha\G  \,=\, \A\rtimes_{\alpha,\rm{r}}\G &\subset &\B(\H_u^{\A\rtimes_\alpha\G})\omin\cstarr(\G) \\ && \\
&\subset& \B(\H_u^{\A\rtimes_\alpha\G})\,\overline\otimes\, \B\left(\ell^2(\G)\right) \\ && \\
&\subset& \B\left(\K \otimes  \ell^2(\G)\right),
\end{array}
\]
where $\overline\otimes$ denotes the von Neumann algebra tensor product, yields another faithful representation of 
$\A\rtimes_\alpha\G$---in this case, as a unital C$^*$-subalgebra of $\B\left(\K \otimes  \ell^2(\G)\right)$.
Let $(\A\rtimes_\alpha\G)''$ denote the double commutant of $\A\rtimes_\alpha\G$ in $\B\left(\K \otimes  \ell^2(\G)\right)$.

Using the vector state $\tau$ on $\B\left(\ell^2(\G)\right)$ defined by $\tau(x)=\langle x\delta_e,\delta_e\rangle$ together with the identity map
$\mbox{\rm id}_{\B(\K)}:\B(\H_u^{\A\rtimes_\alpha\G})\rightarrow\B(\H_u^{\A\rtimes_\alpha\G})$, we obtain a normal ucp map 
\[
\psi=\mbox{\rm id}_{\B(\K)}\overline\otimes\tau:\B(\K)\,\overline\otimes\,\B\left(\ell^2(\G)\right)\rightarrow\B(\K).
\] 
If $\mathcal E:\A\rtimes_{\alpha,\rm {r}}\G\rightarrow\A$ denotes 
the conditional expectation of
$\A\rtimes_{\alpha,\rm {r}}\G$ onto $\A$ whereby $\mathcal E\left(\sum_g a_g\lambda_g\right)=a_e$, then, using the identification
$\A\rtimes_\alpha\G=\A\rtimes_{\alpha,\rm {r}}\G$, the restriction of $\psi$ to $(\A\rtimes_\alpha\G)''$ is a normal extension of $\rho\circ\mathcal E$,
where $\rho:\A\rightarrow\B(\K)$ is the faithful representation of $\A\subset\B\left(\K \otimes  \ell^2(\G)\right)$ as a unital C$^*$-subalgebra of $\B(\K)$.
That is, we have the following commutative diagram:
 \[
\begin{CD}
 \A\rtimes_\alpha\G@>{\mathcal E}>> \A \\
@V{}VV           @VV{\rho}V  \\
(\A\rtimes_\alpha\G)''  @>>{\psi}>  \B(\K)\,. \\ 
\end{CD}
\]
Because $\psi$ is normal, the range of $\psi_{\vert (\A\rtimes_\alpha\G)''}$ is determined by
\[
\psi\left( (\A\rtimes_\alpha\G)''\right) \,=\,\overline{ \left( \psi(\A\rtimes_\alpha\G)\right)}^{\rm SOT}\,=\, \overline{ \left( \rho(\A)\right)}^{\rm SOT}.
\]
In other words, the range of $\psi_{\vert (\A\rtimes_\alpha\G)''}$ is the strong-closure of the C$^*$-subalgebra $\A$ of $\A\rtimes_\alpha\G$ in the
enveloping von Neumann algebra $(\A\rtimes_\alpha\G)^{**}$ of $\A\rtimes_\alpha\G$. Therefore, by \cite[Corollary 3.7.9]{Pedersen-book}, 
there is an isomorphism $\theta: \overline{ \left( \rho(\A)\right)}^{\rm SOT}\rightarrow \A^{**}$ such that $\pi_u^\A=\theta_{\vert\rho(\A)}$.

Now let $\pi_0:(\A\rtimes_\alpha\G)^{**}\rightarrow (\A\rtimes_\alpha\G)''$ be the normal epimorphism that extends the identity map of $\A\rtimes_\alpha\G$.
Because $\A\rtimes_\alpha\G$ has WEP,
there is a ucp map $\phi_0:\B(\H_u^{\A\rtimes_\alpha\G})\rightarrow\left(\A\rtimes_\alpha\G\right)^{**}$ that fixes every element of 
$\A\rtimes_\alpha\G$. Hence, if $\omega=\theta\circ\psi_{\vert (\A\rtimes_\alpha\G)''}\circ\pi_0\circ\phi_0$, then $\omega$ is a ucp map of $\B(\K)\rightarrow \A^{**}$ for which
$\omega(a)=\pi_u^\A(a)$ for every $a\in \A$. That is, $\A$ has WEP.

Conversely, 
assume that $\A$ has the weak expectation property and that $\A$ is (represented faithfully as) a unital C$^*$-subalgebra of $\B(\H)$
for some Hilbert space $\H$. Thus, we consider $\A$ and $\A\rtimes_\alpha\G$ faithfully represented via
\[
 \A\,\subset\,\A\rtimes_\alpha\G  \,=\, \A\rtimes_{\alpha,\rm{r}}\G \,\subset \,  \B\left(\H \otimes  \ell^2(\G)\right).
\]
Note that $\mathfrak u:\G\rightarrow\B(\H_u^{\A\rtimes_\alpha\G})$ whereby $\mathfrak u(g)=\pi_u^{\A\rtimes_\alpha\G}(1\otimes \lambda_g)$ is a unitary
representation of $\G$ such that $(1\otimes\lambda)\times\pi$ is the regular (covariant) representation associated with the dynamical system $(\A, \alpha,\G)$.

Let $\pi_u^{\A\rtimes_\alpha\G}:\A\rtimes_\alpha\G\rightarrow\B(\H_u^{\A\rtimes_\alpha\G})$ be the universal representation of 
$\A\rtimes_\alpha\G$ and define $\pi:\A\rightarrow\B(\H_u^{\A})$ by $\pi=\pi_u^{\A\rtimes_\alpha\G}{}_{\vert \A\rtimes_\alpha\G}$.
Because $\pi$ is a faithful representation of $\A$ and $\A$ has WEP, there is a ucp map 
\[
\phi_0:\B(\H)\rightarrow\pi(\A)'' \subset \pi_u^{\A\rtimes_\alpha\G}(\A\rtimes_\alpha\G)''
\]
such that
$\phi_0\left(\pi(a)\right)=\pi(a)$ for every $a\in\A$.

As in \cite[Proposition 4.5.1]{Brown--Ozawa-book}, if $F\subset\G$ is a finite nonempty subset and if
$p_F\in\B(\ell^2(\G))$ is the projection with range
$\mbox{Span}\{\delta_f\,:\,f\in F\}$, then $p_F\B(\ell^2(\G))p_F$ is isomorphic to the matrix algebra $\mn$ for $n=|F|$, and so 
we obtain a ucp map $\phi_F:\B(\H\otimes\ell^2(\G)\rightarrow \B(\H)\otimes \mn$ defined by 
$\phi_F(x)=(1\otimes p_F)x(1\otimes p_F)$.
Next, let $\{e_{f,h}\}_{f,h\in F}$ denote the matrix units of $\mn$ and
define an action $\beta$ of $\G$ on $\pi(\A)''$ by $\beta_g(y)=\mathfrak u(g)y\mathfrak u(g)^*$, for $y\in\pi(\A)''$. Observe that
$\pi(\A)''\rtimes_\beta\G\subset\pi_u^{\A\rtimes_\alpha\G}(\A\rtimes_\alpha\G)''$.

The linear map
$\psi_F:\pi(\A)''\otimes\mn\rightarrow\A\rtimes_\beta\G$ for which $\psi_F(y\otimes e_{f,h})=|F|^{-1}\beta_f(y)\mathfrak{u}(fh^{-1})$, for $y\in\pi(\A)''$,
is a ucp map by the proof of \cite[Lemma 4.2.3]{Brown--Ozawa-book}. Hence,
$\theta_F:=\psi_F\circ(\phi_0\otimes{\rm id}_{\mn})\circ\phi_F$ is a ucp map $\B\left(\H \otimes  \ell^2(\G)\right)\rightarrow\pi_u^{\A\rtimes_\alpha\G}(\A\rtimes_\alpha\G)''$.

Hence, if $\{F_i\}_i$ is a F{\o}lner net in $\G$ and if $\theta_i:\B\left(\H \otimes  \ell^2(\G)\right)\rightarrow\pi_u^{\A\rtimes_\alpha\G}(\A\rtimes_\alpha\G)''$
is the ucp map constructed above, for each $i$, then the net $\{\theta_i\}_i$ admits a cluster point $\theta$ relative to the point-ultraweak topology.
Now, for every $i\in\Lambda$, $a\lambda_g\in \A\rtimes_{\alpha,\rm{r}}\G$, and $\xi,\eta\in\H^{\A\rtimes_\alpha\G}$,
\[
\begin{array}{rcl}
\left| \langle \left( \theta(a\lambda_g)-\pi_u^{ \A\rtimes_\alpha\G}(a\lambda_g)\right) \xi,\eta\rangle \right|
&\leq&
\left| \langle \left( \theta(a\lambda_g)-\theta_{F_i}(a\lambda_g)\right) \xi,\eta\rangle \right| \\
&&\quad +\, \left| \langle \left( \theta_{F_i}(a\lambda_g)-\pi_u^{ \A\rtimes_\alpha\G}(a\lambda_g)\right) \xi,\eta\rangle \right| \\ && \\
&=& \left| \left(1-\displaystyle\frac{|F_i\cap gF_i|}{|F_i|}\right) \langle \pi_u^{ \A\rtimes_\alpha\G}(a\lambda_g)\xi,\eta\rangle \right|
\,.
\end{array}
\]
Because $\theta$ is a cluster point of $\{\theta_i\}_i$, we deduce that $ \theta(a\lambda_g)=\pi_u^{ \A\rtimes_\alpha\G}(a\lambda_g)$.
Hence, by continuity, $\theta:\B\left(\H \otimes  \ell^2(\G)\right)\rightarrow\pi_u^{\A\rtimes_\alpha\G}(\A\rtimes_\alpha\G)''$ is a ucp map for
that extends the identity map on $\pi_u^{\A\rtimes_\alpha\G}(\A\rtimes_\alpha\G)$, which proves that $\A\rtimes_\alpha\G$
has the weak expectation property.
\end{proof}

\section{Remarks}

The two proofs given in Theorems \ref{main result} and \ref{mr amnbl} of the implication \emph{$\A\rtimes_\alpha\G$ has WEP $\Rightarrow$ $\A$ has WEP}
depend only on the equality $\A\rtimes_\alpha\G=\A\rtimes_{\alpha,\rm {r}}\G$ rather than on the amenability of the action $\alpha$ or the group $\G$ itself.
 
The arguments to establish Theorems \ref{main result} and \ref{mr amnbl} 
depend crucially on the fact that $\A$ is a unital C$^*$-algebra, and it would be of interest to
know to what extent such results remain true for non-unital C$^*$-algebras.
 

\end{document}